\begin{document}

\titlefigurecaption{{\large \bf \rm Progress in Fractional Differentiation
and Applications}\\ {\it\small An International Journal}}

\title{Lyapunov inequality for a  boundary value problem involving conformable derivative}
\author{Rabah Khaldi, Assia Guezane-Lakoud}
\institute{ Laboratory of Advanced Materials, Departement of
Mathematics, Badji Mokhtar-Annaba University, P.O. Box 12, 23000
Annaba, Algeria}

\titlerunning{Lyapunov inequality for a  boundary value problem ...}
\authorrunning{R. Khaldi et al. }

\mail{rkhadi@yahoo.fr}

\received{ 3 Apr. 2017} \revised{9 May 2017} \accepted{15 May 2017} \published{.}

\abstracttext{ We consider a boundary value problem involving
conformable derivative of order $\alpha ,$ $1<\alpha <2$ and
Dirichlet conditions. To prove the existence of solutions, we
apply the method of upper and lower solutions together with
Schauder's fixed-point theorem. \ Futhermore, we give the Lyapunov
inequality for the corresponding problem.}
 \keywords{Boundary value problem, Lyapunov inequality, Conformable
derivative, Upper and lower solutions method, Existence of solution.\\[2mm]
\textbf{2010 Mathematics Subject Classification}.  Primary 34B15,
34A08; Secondary 26A33, 34A12.} \maketitle
\section{\; Introduction}

Recently, an interesting derivative called conformable derivative that is
based on a limit form as in the classical derivative was introduced by
Khalil et al. in [20]. Later, this new local derivative is getting more
attention and is improved by Abdeljawad in [1]. The importance of the
conformable derivative is that it has similar properties than the classical
one. Nevertheless, this conformable derivative doesn't satisfy the index law
[18,24] and the zero order derivative property i.e. the zero order
derivative of a differentiable function does not return to the function
itself.

Following this new conformable derivative, several papers have been
presented, in particular some studies about boundary value problems for
conformable differential equations have been the subject of some papers
[3-8,18,24,26]. Furthermore, in [6], Batarfi et al. studied a conformable
differential equation of order $\alpha \in \left( 1,2\right] ,$ with three
point boundary conditions and proved the existence and uniqueness of
solution by using fixed point theorems. In [7], Bayour et al. solved an
initial conformable differential value problem for $\alpha \in \left(
0,1\right) $ by the help of the tube solution method which is a
generalization of the lower and upper solutions method.

In this work, we analyze the existence of solutions for the following
boundary value problem (P)%
\begin{equation*}
T_{\alpha }^{a}u\left( t\right) +f(t,u\left( t\right) )=0,a<t<b,  \tag{1.1}
\end{equation*}

\begin{equation*}
u\left( a\right) =u\left( b\right) =0  \tag{1.2}
\end{equation*}%
where $1<\alpha <2,$ $T_{\alpha }^{a}$ denotes the conformable derivative of
order $\alpha $, $u$ is the unknown function and $f:\left[ a,b\right] \times
\mathbb{R}\rightarrow \mathbb{R}$ is a given function. For this purpose, we
use the method of upper and lower solutions together with Schauder's
fixed-point theorem. The method of lower and upper solutions\ is a powerful
tool in the investigation of the existence of solutions and has been used in
several papers, we refer to [10,13,15,19].

In the case $f(t,u\left( t\right) )=q\left( t\right) u\left( t\right) $, we
prove a new Lyapunov inequality that coincide with the classical one when $%
\alpha =2.$

The classical Lyapunov inequality states that if $q:[a,b]\rightarrow R$ is a
real and continuous function, then a necessary condition for the boundary
value problem

\begin{equation*}
-u^{\prime \prime }\left( t\right) =q\left( t\right) u\left( t\right) ,a<t<b
\end{equation*}%
\begin{equation*}
u\left( a\right) =u\left( b\right) =0
\end{equation*}%
to have nontrivial solutions is that%
\begin{equation*}
\int_{a}^{b}\left\vert q\left( t\right) \right\vert dt\geq \frac{4}{b-a},
\tag{1.3}
\end{equation*}%
see [21]. An equivalent version of the Lyapunov inequality (1.3) was proved
by Borg see [8].%
\begin{equation*}
\int_{a}^{b}\frac{\left\vert u^{\prime \prime }\left( t\right) \right\vert }{%
u\left( t\right) }dt\geq \frac{4}{b-a}.  \tag{1.4}
\end{equation*}%
under the condition $u(t)>0$ for $t\in (a,b)$.

Many authors have extended the Lyapunov inequality by considering a
fractional derivative or a sequential of fractional derivatives instead of
the second derivative in equation (1.1), see [2,9,11-12,14,16-17,21-23,25].
In particular, we cite the paper of Ferreira [12], where he gave the
corresponding Lyapunov type inequalities for both Caputo sequential
fractional differential equation and Riemann-Liouville sequential fractional
differential equation subject to Dirichlet boundary conditions. In [2],
Agarwal et al. obtained Lyapunov type inequalities for mixed nonlinear
Riemann-Liouville fractional differential equations with a forcing term and
Dirichlet boundary conditions. Recently, Guezane-Lakoud et al. [14],
considered a mixed left Riemann--Liouville and right Caputo differential
equation subject to natural conditions and obtained a new Lyapunov type
inequality.

This paper is organized as follows. In Section 2, we present the main
concepts of the conformable derivatives, we give some useful properties and
we prove a property on the extremum of a function for a conformable
derivative. In Section 3, we prove existence of solution to problem (P) by
using the method of upper and lower solutions together with Schauder's
fixed-point theorem. In Section 4, we prove a Lyapunov inequality for
problem (P) in the case $f(t,u\left( t\right) )=q\left( t\right) u\left(
t\right) $.

As far as we know, this work will be the first one that gives the Lyapunov
inequality for conformable differential equations.

\section{\; Preliminaries}

We recall some essential definitions on conformable derivatives that can be
found in [1,20].

Let $n<\alpha <n+1$, and set $\beta =\alpha -n$, for a function $g:\left[
a,\infty \right) \rightarrow \mathbb{R},$ we denote by%
\begin{equation*}
I_{\alpha }^{a}g(t)=\int_{a}^{t}(s-a)^{\alpha -1}g(s)ds,n=0,
\end{equation*}%
and%
\begin{equation*}
I_{\alpha }^{a}g(t)=\frac{1}{n!}\int_{a}^{t}(t-s)^{n}g(s)d\beta
(s,a)=\frac{1}{n!}\int_{a}^{t}(t-s)^{n}(s-a)^{\beta
-1}g(s)ds,n\geq 1.
\end{equation*}

\begin{remark}

\textbf{\ } Notice that, since $0<\beta <1$, then $I_{\alpha
}^{a}g(t)$ is the
Lebesgue-stieltjes integral of the function $(t-s)^{n}g(s)$ on $\left[ a,t%
\right] $ and $d\beta (s,a)=(s-a)^{\beta -1}ds$ is an absolutely continuous
measure with respect to the Lebesgue measure on the real line, generated by
the absolutely continuous function $(t-a)^{\beta }$ and the weight function $%
(s-a)^{\beta -1}\in L_{1}\left[ a,b\right] $ is its Radon-Nikodym derivative
according to the Lebesgue measure.

\end{remark}

The conformable derivative of order\ $0<\alpha <1,$ of a function $g:\left[
a,\infty \right) \rightarrow \mathbb{R}$\ is defined by%
\begin{equation*}
T_{\alpha }^{a}g(t)=\underset{\epsilon \rightarrow 0}{\lim }\frac{g\left(
t+\varepsilon \left( t-a\right) ^{1-\alpha }\right) -g\left( t\right) }{%
\varepsilon },t>a.
\end{equation*}%
If $T_{\alpha }^{a}g(t)$ exists on $(a,b),$ $b>a$ and$\ \underset{%
t\rightarrow a^{+}}{\lim }T_{\alpha }^{a}g(t)$ exists, then we define $%
T_{\alpha }^{a}g(a)=\underset{t\rightarrow a^{+}}{\lim }T_{\alpha }^{a}g(t).$

The conformable derivative of order\ $n<\alpha <n+1$ of a function $g:\left[
a,\infty \right) \rightarrow \mathbb{R}$,\ when $g^{\left( n\right) }$
exists, is defined by%
\begin{equation*}
T_{\alpha }^{a}g(t)=T_{\beta }^{a}g^{\left( n\right) }(t),
\end{equation*}%
where $\beta =\alpha -n\in \left( 0,1\right) .$

For the properties of the conformable derivative, we mention the following:

Let $n<\alpha <n+1$ and $g$ be an $\left( n+1\right) $-differentiable at $t>a
$, then we have

\begin{equation*}
T_{\alpha }^{a}g\left( t\right) =\left( t-a\right) ^{n+1-\alpha
}g^{\left( n+1\right) }\left( t\right)   \tag{2.1}
\end{equation*}

and%
\begin{equation*}
I_{\alpha }^{a}T_{\alpha }^{a}g\left( t\right) =g\left( t\right)
-\sum_{k=0}^{n}\frac{g^{\left( k\right) }\left( a\right) \left( t-a\right)
^{k}}{k!}.
\end{equation*}

\begin{remark}

 \begin{itemize} \item  For $0<\alpha <1$, using (2.1) it follows
that, if a function $g$\ is
differentiable at $t>a$, then one has%
\begin{equation*}
\underset{\alpha \rightarrow 1}{\lim }T_{\alpha }^{a}g(t)=g^{\prime }(t)
\end{equation*}%
and%
\begin{equation*}
\underset{\alpha \rightarrow 0}{\lim }T_{\alpha }^{a}g(t)=(t-a)g^{\prime
}(t),
\end{equation*}%
i.e. the zero order derivative of a differentiable function does not return
to the function itself.

\item Let $n<\alpha <n+1,$ if $g$ is $\left( n+1\right) $-differentiable on $%
(a,b),$ $b>a$ and $\underset{t\rightarrow a^{+}}{\lim }g^{\left( n+1\right) }
$ exists, then from (2.1), we get $T_{\alpha }^{a}g\left( a\right) =\underset%
{t\rightarrow a^{+}}{\lim }T_{\alpha }^{a}g(t)=0.$

\item Let $n<\alpha <n+1,$ if $g$ is $\left( n+1\right) $-differentiable at $%
t>a$, then we can show that $T_{\alpha }^{a}g\left( t\right) =T_{\alpha
-k}^{a}g^{\left( k\right) }\left( t\right) $ for all positive integer $%
k<\alpha .$
\end{itemize}
\end{remark}

Similarly to the classical case, we give a property on the extremum of a
function that has a conformable derivative:

\begin{proposition}
 \textbf{\ } Let $1<\alpha <2,$ if a function $g$ $\in C^{1}\left[ a,b\right] $ attains a
global maximum (respectively minimum) at some point $\xi \in \left(
a,b\right) $, then $T_{\alpha }^{a}g\left( \xi \right) \leq 0$ (respectively
$T_{\alpha }^{a}g\left( \xi \right) \geq 0$).
\end{proposition}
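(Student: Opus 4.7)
The plan is to reduce the claim to the classical second-derivative test at a maximum, using the identity $T_\alpha^a g(t) = T_\beta^a g'(t)$ with $\beta = \alpha - 1 \in (0,1)$ recorded in Section~2. First, since $g \in C^1[a,b]$ attains a global maximum at the interior point $\xi$, the ordinary Fermat theorem gives $g'(\xi) = 0$.

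Next I would unpack $T_\beta^a g'(\xi)$ from its limit definition. Because $g'(\xi) = 0$,
\[
T_\alpha^a g(\xi) = \lim_{\varepsilon \to 0}\frac{g'\bigl(\xi + \varepsilon(\xi-a)^{1-\beta}\bigr)}{\varepsilon}.
\]
The substitution $h = \varepsilon(\xi-a)^{1-\beta}$, legitimate because $(\xi-a)^{1-\beta}$ is a fixed positive constant, rewrites this as
\[
T_\alpha^a g(\xi) = (\xi-a)^{1-\beta}\lim_{h \to 0}\frac{g'(\xi+h) - g'(\xi)}{h} = (\xi-a)^{2-\alpha}\,g''(\xi),
\]
where $g''(\xi)$ denotes the ordinary derivative of $g'$ at the single point $\xi$. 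Observe that the mere existence of $T_\alpha^a g(\xi)$ forces $g''(\xi)$ to exist as a classical two-sided derivative there, even though $g$ is only assumed $C^1$ on the whole interval.

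It remains to show $g''(\xi) \leq 0$. Since $\xi$ is a local maximum with $g'(\xi) = 0$ and with $g''(\xi)$ existing, Taylor's formula with Peano remainder gives $g(\xi+h) - g(\xi) = \tfrac{1}{2}g''(\xi)h^2 + o(h^2)$; the left-hand side is nonpositive, so dividing by $h^2 > 0$ and letting $h \to 0$ yields $g''(\xi) \leq 0$. Multiplying by the positive factor $(\xi-a)^{2-\alpha}$ gives $T_\alpha^a g(\xi) \leq 0$, and the minimum case follows by symmetry (or by applying the maximum case to $-g$). The main subtlety in the plan is the reconciliation of the $C^1$ hypothesis with the sudden appearance of $g''$; this is handled entirely by the change-of-variables step above, not by an extra regularity assumption.
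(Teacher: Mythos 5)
Your proposal is correct and follows essentially the same route as the paper: both reduce the claim via the identity $T_{\alpha}^{a}g(\xi)=T_{\alpha-1}^{a}g'(\xi)$ and Fermat's theorem ($g'(\xi)=0$) to the sign of the limit $\lim_{\varepsilon\to 0}g'\left(\xi+\varepsilon(\xi-a)^{2-\alpha}\right)/\varepsilon$. The paper stops at that displayed identity and leaves the conclusion implicit, whereas you supply the missing details (the change of variables identifying the limit with $(\xi-a)^{2-\alpha}g''(\xi)$ and the Taylor--Peano second-derivative test), which is a legitimate completion of the same argument.
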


\begin{proof}
\textbf{\ } The result follows from the fact that
\begin{equation*}
T_{\alpha }^{a}g\left( \xi \right) =T_{\alpha -1}^{a}g^{\prime }\left( \xi
\right) =\underset{\varepsilon \rightarrow 0}{\lim }\dfrac{g^{\prime }\left(
\xi +\varepsilon \left( \xi -a\right) ^{2-\alpha }\right) }{\varepsilon }.
\end{equation*}
\end{proof}

\section{\; Existence of solutions}

Let $AC^{2}\left[ a,b\right] =\left\{ u\in C^{1}\left[ a,b\right] ,u^{\prime
}\in AC\left[ a,b\right] \right\} ,$ where $AC\left[ a,b\right] $ is the
space of absolutely continuous functions on $\left[ a,b\right] .$ Denote $%
L^{1}\left( \left[ a,b\right] ,\rho (s)ds\right) $ the Banach space of
Lebesgue integrable functions on $\left[ a,b\right] $ with respect to the
positive weight function $\rho (s)=\left( s-a\right) ^{\alpha -2}\in L^{1}%
\left[ a,b\right] ,$ $1<\alpha <2.$

To prove the existence of solutions for problem (P), we use the lower and
upper solutions method, we need the following definition of lower and upper
solutions for problem (P).

\begin{definition}
\textbf{\ } The functions $\underline{\sigma }$, $\overline{\sigma }\in AC^{2}\left[ a,b%
\right] $ are called lower and upper solutions of problem (P) respectively,
if

a) $T_{\alpha }^{a}\underline{\sigma }\left( t\right) +f\left( t,\underline{%
\sigma }\left( t\right) \right) \geq 0,$for all $t\in \lbrack a,b],$

$\underline{\sigma }\left( a\right) \leq 0,$ $\underline{\sigma }(b)\leq 0.$

b) $T_{\alpha }^{a}\overline{\sigma }\left( t\right) +f\left( t,\overline{%
\sigma }\left( t\right) \right) \leq 0,$ for all $t\in \lbrack a,b],$

$\overline{\sigma }\left( a\right) \geq 0,$ $\overline{\sigma }(b)\geq 0.$
\end{definition}

Next, we solve the following linear boundary value problem.

\begin{lemma}
\textbf{\ } Assume that $y\in C\left[ a,b\right] $, then the
following linear boundary
value problem%
\begin{equation*}
T_{\alpha }^{a}u\left( t\right) +y\left( t\right) =0, a<t<b,\tag{3.1}
\end{equation*}
\begin{equation*}
u\left( a\right) =u\left( b\right) =0,
\end{equation*}%
has a unique solution given by%
\begin{equation*}
u\left( t\right) =\int_{a}^{b}G\left( t,s\right) y\left( s\right)
\rho \left( s\right) ds,  \tag{3.2}
\end{equation*}%
where%
\begin{equation*}
G\left( t,s\right) =\frac{1}{\left( b-a\right) }\left\{
\begin{array}{c}
-\left( b-a\right) \left( t-s\right) +\left( b-s\right) \left( t-a\right)
,a\leq s\leq t\leq b \\
\left( b-s\right) \left( t-a\right) ,a\leq t<s\leq b.%
\end{array}%
\right. .  \tag{3.3}
\end{equation*}
\end{lemma}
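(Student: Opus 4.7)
The plan is to reduce the conformable equation to an elementary ODE via the identity (2.1). With $n=1$ and $\beta=\alpha-1\in(0,1)$, property (2.1) reads $T_\alpha^a u(t)=(t-a)^{2-\alpha}u''(t)$, so (3.1) is equivalent to $u''(t)=-(t-a)^{\alpha-2}y(t)=-\rho(s)y(t)$. I would rather work with the integration formula given just after (2.1), namely
\[
I_\alpha^a T_\alpha^a u(t)=u(t)-u(a)-u'(a)(t-a),
\]
because it directly builds the Green's kernel.

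First, I would apply $I_\alpha^a$ to (3.1). Using the explicit form
\[
I_\alpha^a y(t)=\int_a^t(t-s)(s-a)^{\alpha-2}y(s)\,ds=\int_a^t(t-s)\rho(s)y(s)\,ds,
\]
together with the boundary condition $u(a)=0$, I obtain
\[
u(t)=u'(a)(t-a)-\int_a^t(t-s)\rho(s)y(s)\,ds.
\]
Then I would impose $u(b)=0$ to solve for the unknown constant:
\[
u'(a)=\frac{1}{b-a}\int_a^b(b-s)\rho(s)y(s)\,ds.
\]

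Substituting this back and splitting $\int_a^b=\int_a^t+\int_t^b$ in the $u'(a)(t-a)$ term, the coefficient of $\rho(s)y(s)$ on $[a,t]$ becomes
\[
\frac{(t-a)(b-s)}{b-a}-(t-s)=\frac{-(b-a)(t-s)+(b-s)(t-a)}{b-a},
\]
which is precisely the first branch of $G(t,s)$ in (3.3); on $[t,b]$ the coefficient is $(t-a)(b-s)/(b-a)$, matching the second branch. This yields (3.2).

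For uniqueness, the difference $v$ of two solutions satisfies $T_\alpha^a v\equiv 0$ on $(a,b)$ with $v(a)=v(b)=0$; by (2.1) this gives $v''\equiv 0$, so $v$ is affine, and the Dirichlet conditions force $v\equiv0$. The only mild obstacle is bookkeeping: one must correctly identify $\beta-1=\alpha-2$ in the weight and keep track of the split of the integral at $s=t$ when assembling $G$, but there is no conceptual difficulty once (2.1) and its integral inverse are invoked.
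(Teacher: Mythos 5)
Your proposal is correct and follows essentially the same route as the paper: apply $I_{\alpha }^{a}$ to (3.1), use the inversion formula to introduce the constant $c=u^{\prime }(a)(t-a)$, determine it from $u(b)=0$, and split the resulting integral at $s=t$ to assemble $G$. The only difference is that you also spell out uniqueness (via $T_{\alpha }^{a}v\equiv 0$ forcing $v$ affine), which the paper leaves implicit.
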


\begin{proof}
\textbf{\ } Applying the integral operator $I_{\alpha }^{a},$ to
both sides of the differential equation (3.1), we get
\begin{equation*}
I_{\alpha }^{a}T_{\alpha }^{a}u\left( t\right) +I_{\alpha }^{a}y\left(
t\right) =0,
\end{equation*}%
hence
\begin{equation*}
u\left( t\right) -u\left( a\right) -c\left( t-a\right) +I_{\alpha
}^{a}y\left( t\right) =0.
\end{equation*}%
Since $u\left( a\right) =0,$ then%
\begin{equation*}
u\left( t\right) =-I_{\alpha }^{a}y\left( t\right) +c\left( t-a\right) .
\tag{3.4}
\end{equation*}%
From $u\left( b\right) =0,$ we get%
\begin{equation*}
c=\frac{1}{\left( b-a\right) }I_{\alpha }^{a}y\left( t\right) \left\vert
_{t=b}\right.
\end{equation*}%
Substituting $c$ by its value in (3.4), it yields%
\begin{eqnarray*}
u\left( t\right)  &=&-I_{\alpha }^{a}y\left( t\right) +\frac{\left(
t-a\right) }{\left( b-a\right) }I_{\alpha }^{a}y\left( t\right) \left\vert
_{t=b}\right.  \\
&=&-\int_{a}^{t}\left( t-s\right) \left( s-a\right) ^{\alpha -2}y\left(
s\right) ds+\frac{\left( t-a\right) }{\left( b-a\right) }\int_{a}^{b}\left(
b-s\right) \left( s-a\right) ^{\alpha -2}y\left( s\right) ds \\
&=&\int_{a}^{b}G\left( t,s\right) y\left( s\right) \rho (s)ds,
\end{eqnarray*}%
where the Green function $G$ is given in (3.3).
\end{proof}

\begin{lemma}
\textbf{\ } The Green function $G$ is nonnegative, continuous and satisfies%
\begin{equation*}
0\leq G\left( t,s\right) \leq b-a,\forall s,t\in \left[ a,b\right] .
\tag{3.5}
\end{equation*}
\end{lemma}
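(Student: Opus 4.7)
The plan is to first simplify the top branch of the piecewise definition so that both cases of $G$ have a single transparent product form, from which nonnegativity, continuity and the upper bound all follow by elementary estimates.

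First I would expand the expression for $a\le s\le t\le b$:
\begin{equation*}
-(b-a)(t-s)+(b-s)(t-a)=(s-a)(b-t),
\end{equation*}
which is a short bookkeeping calculation. This recasts $G$ as
\begin{equation*}
G(t,s)=\frac{1}{b-a}
\begin{cases}
(s-a)(b-t), & a\le s\le t\le b,\\
(t-a)(b-s), & a\le t<s\le b.
\end{cases}
\end{equation*}
In this form nonnegativity is immediate: all four factors $s-a$, $b-t$, $t-a$, $b-s$ are $\ge 0$ on $[a,b]$, and $b-a>0$.

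Next I would check continuity. Both branches are polynomials in $(t,s)$, hence continuous on their respective closed triangles, so only the diagonal $s=t$ has to be verified. Evaluating each branch there gives the common value $\frac{(t-a)(b-t)}{b-a}$, so the two pieces glue continuously and $G$ is continuous on $[a,b]\times[a,b]$.

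Finally, the upper bound $G(t,s)\le b-a$ follows by estimating each factor by $b-a$. In the region $s\le t$ we have $s-a\le b-a$ and $b-t\le b-a$, so $(s-a)(b-t)\le (b-a)^{2}$ and division by $b-a$ yields the bound; the region $t<s$ is symmetric. I do not expect any real obstacle here — the only step that requires care is the algebraic simplification of the first branch, and once that is done the three claims are essentially one-line verifications.
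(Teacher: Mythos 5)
Your proof is correct. The paper actually states this lemma without any proof at all, so there is nothing to compare against; your argument — factoring the first branch as $(s-a)(b-t)$, checking that the two branches agree on the diagonal $s=t$, and bounding each factor by $b-a$ — is the standard and complete justification (indeed it even yields the sharper bound $G(t,s)\leq (b-a)/4$, though the stated bound $b-a$ is all the paper needs).
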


Now we give the main result on the existence of solutions for the nonlinear
problem (P).

\begin{theorem}
\textbf{\ } Let $\underline{\sigma }$ and $\overline{\sigma }$ be
the lower and upper solutions of (P) such that $\underline{\sigma
}\leq \overline{\sigma },$ define $E=\{\left( t,x\right) \in
\left[ a,b\right] \times
\mathbb{R}
,\underline{\sigma }\left( t\right) \leq x\leq \overline{\sigma }\left(
t\right) \}$ and assume that $f(t,x)$ is continuous on $E$. Then the problem
(P) has at least one solution $u\in AC^{2}\left( \left[ a,b\right] \right) $
such that
\begin{equation*}
\underline{\sigma }\left( t\right) \leq u\left( t\right) \leq \overline{%
\sigma }\left( t\right) ,a<t<b.
\end{equation*}
\end{theorem}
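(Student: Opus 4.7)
The plan is to reduce the nonlinear problem to a fixed point of a compact operator on a suitable ball in $C[a,b]$ and then use the extremum property (Proposition~1) to locate the fixed point inside the strip between $\underline{\sigma}$ and $\overline{\sigma}$. First, I would modify $f$ so that it becomes continuous and bounded on all of $[a,b]\times\mathbb{R}$. Set $\gamma(t,x)=\max\{\underline{\sigma}(t),\min\{x,\overline{\sigma}(t)\}\}$ and define
\begin{equation*}
F(t,x)=f(t,\gamma(t,x))+\eta(t,x),
\end{equation*}
where $\eta$ is a small penalty: $\eta(t,x)=-\frac{x-\overline{\sigma}(t)}{1+x^{2}}$ when $x>\overline{\sigma}(t)$, $\eta(t,x)=\frac{\underline{\sigma}(t)-x}{1+x^{2}}$ when $x<\underline{\sigma}(t)$, and $\eta\equiv 0$ on $E$. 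Then $F$ is continuous on $[a,b]\times\mathbb{R}$ and bounded by some $M$ depending only on $f|_{E}$ and $\|\underline{\sigma}\|,\|\overline{\sigma}\|$; moreover $F(t,x)=f(t,x)$ whenever $\underline{\sigma}(t)\le x\le\overline{\sigma}(t)$.

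Next I would consider the modified boundary value problem $T_{\alpha}^{a}u(t)+F(t,u(t))=0$ with $u(a)=u(b)=0$. By Lemma~2, this is equivalent to finding a fixed point of the operator
\begin{equation*}
(\mathcal{T}u)(t)=\int_{a}^{b}G(t,s)F(s,u(s))\rho(s)\,ds, \qquad u\in C[a,b].
\end{equation*}
Using the bound $0\le G(t,s)\le b-a$ from Lemma~3, the bound $|F|\le M$, and the fact that $\rho\in L^{1}[a,b]$ with $\int_{a}^{b}\rho(s)\,ds=\frac{(b-a)^{\alpha-1}}{\alpha-1}$, I would verify that $\mathcal{T}$ maps the closed ball $B_{R}\subset C[a,b]$ of radius $R=M(b-a)\cdot\frac{(b-a)^{\alpha-1}}{\alpha-1}$ into itself. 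Continuity of $\mathcal{T}$ follows from the dominated convergence theorem, and equicontinuity of $\mathcal{T}(B_{R})$ follows by estimating $|(\mathcal{T}u)(t_{1})-(\mathcal{T}u)(t_{2})|$ via the continuity of $t\mapsto G(t,s)$; Arzelà--Ascoli then gives compactness, so Schauder's fixed-point theorem delivers a fixed point $u\in B_{R}$. From the explicit representation $(\mathcal{T}u)(t)=-I_{\alpha}^{a}F(\cdot,u)(t)+c(t-a)$ obtained in the proof of Lemma~2, one reads off that $u\in AC^{2}[a,b]$.

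The main obstacle, and the step that the penalty is designed to overcome, is showing that every such fixed point satisfies $\underline{\sigma}\le u\le\overline{\sigma}$ on $[a,b]$. Suppose for contradiction that $v:=u-\overline{\sigma}$ attains a positive maximum at some $\xi\in(a,b)$ (the boundary values are non-positive because $u(a)=u(b)=0$ and $\overline{\sigma}(a),\overline{\sigma}(b)\ge 0$). Proposition~1 applied to $v\in C^{1}[a,b]$ gives $T_{\alpha}^{a}v(\xi)\le 0$. On the other hand, since $u(\xi)>\overline{\sigma}(\xi)$, the definition of $F$ and of upper solution yield
\begin{equation*}
T_{\alpha}^{a}v(\xi)=T_{\alpha}^{a}u(\xi)-T_{\alpha}^{a}\overline{\sigma}(\xi)\ge -F(\xi,u(\xi))+f(\xi,\overline{\sigma}(\xi))=\frac{u(\xi)-\overline{\sigma}(\xi)}{1+u(\xi)^{2}}>0,
\end{equation*}
a contradiction. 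An entirely symmetric argument with $w=\underline{\sigma}-u$ rules out $u(t)<\underline{\sigma}(t)$. Hence $\underline{\sigma}\le u\le\overline{\sigma}$, so $F(t,u(t))=f(t,u(t))$ throughout $[a,b]$ and $u$ solves the original problem (P), completing the proof.
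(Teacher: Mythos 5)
Your proposal is correct and follows essentially the same route as the paper: a truncation-plus-penalty modification of $f$, a Schauder fixed-point argument for the Green's-function operator on $C[a,b]$, and localization of the fixed point between $\underline{\sigma}$ and $\overline{\sigma}$ via Proposition~1 applied at an interior positive maximum. The only differences are cosmetic (a different but equally bounded penalty term, and disposing of the endpoint cases in one line via $u(a)=u(b)=0$ and the sign conditions on $\overline{\sigma}$).
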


\begin{proof}
\textbf{\ } Define the modified problem%
\begin{equation*}
\left( MP\right) \left\{
\begin{array}{c}
T_{\alpha }^{a}u\left( t\right) +F\left( t,u(t)\right) =0,a<t<b, \\
u\left( a\right) =u\left( b\right) =0,%
\end{array}%
\right.
\end{equation*}%
where%
\begin{equation*}
F(t,x)=\left\{
\begin{array}{c}
f(t,\overline{\sigma }\left( t\right) )+\frac{\overline{\sigma }\left(
t\right) -x}{x-\overline{\sigma }\left( t\right) +1},\text{\ \ \ \ \ \ \ \ \
\ \ \ \ for }x>\overline{\sigma }\left( t\right) , \\
f(t,x),\text{ \ \ \ \ \ \ \ \ \ \ \ \ \ \ \ \ \ \ \ \ \ \ \ for }\underline{%
\sigma }\left( t\right) \leq x\leq \overline{\sigma }\left( t\right) , \\
f(t,\underline{\sigma }\left( t\right) )+\frac{\underline{\sigma }\left(
t\right) -x}{\underline{\sigma }\left( t\right) -x+1},\text{ \ \ \ \ \ \ \ \
\ \ \ for \ }x<\underline{\sigma }\left( t\right) .%
\end{array}%
\right.
\end{equation*}%
The function $F\left( t,x\right) $ is called a modification of $f(t,x)$
associated with the coupled of lower and upper solutions $\underline{\sigma }
$ and $\overline{\sigma }$. It follows from the definition of $F$ that $%
F(t,x)$ is continuous and $\left\vert F(t,x)\right\vert \leq M$\ on $\left[
a,b\right] \times
\mathbb{R}
$, with $M=M_{0}+1$ where
\begin{equation*}
M_{0}=\max \left\{ \left\vert f(t,x)\right\vert ,(t,x)\in E\right\} .
\end{equation*}%
Define the operator $A$ on $X=C\left[ a,b\right] ,$ by
\begin{equation*}
Au\left( t\right) =\int_{a}^{b}G\left( t,s\right) \left( s-a\right) ^{\alpha
-2}F\left( s,u(s)\right) ds,a\leq t\leq b.
\end{equation*}%
Set $\Omega =\{u\in C\left[ a,b\right] ,\left\vert u\left( t\right)
\right\vert \leq M\frac{\left( b-a\right) ^{\alpha +1}}{\alpha -1},a\leq
t\leq b\}.$ We will show that $A\left( \Omega \right) $ is uniformly
bounded. Let $u\in \Omega $, then, using (3.5), we get
\begin{equation*}
\left\vert Au\left( t\right) \right\vert \leq \int_{a}^{b}G\left( t,s\right)
\left( s-a\right) ^{\alpha -2}\left\vert F\left( s,u(s)\right) \right\vert
ds\leq M\frac{\left( b-a\right) ^{\alpha }}{\alpha -1},
\end{equation*}%
consequently $A\left( \Omega \right) $ is uniformly bounded and $A\left(
\Omega \right) \subset \Omega $.

Now we prove that $A\left( \Omega \right) $ is equicontinuous. For $a\leq
t_{1}<t_{2}\leq b,$ we have
\begin{eqnarray*}
&&\left\vert Au\left( t_{1}\right) -Au\left( t_{2}\right) \right\vert \\
&\leq &\left( t_{2}-t_{1}\right) M\int_{a}^{t_{1}}\left( s-a\right) ^{\alpha
-2}ds+M\int_{t_{1}}^{t_{2}}\left( t_{2}-s\right) \left( s-a\right) ^{\alpha
-2}ds \\
&&+\frac{\left( t_{2}-t_{1}\right) M}{\left( b-a\right) }\int_{a}^{b}\left(
b-s\right) \left( s-a\right) ^{\alpha -2}ds \\
&\leq &\frac{M}{\alpha -1}\left[ \left( t_{2}-t_{1}\right) \left( \left(
t_{1}-a\right) ^{\alpha -1}+\left( b-a\right) ^{\alpha -2}\right)
+t_{2}\left( t_{2}-t_{1}\right) ^{\alpha -1}\right] \rightarrow 0,
\end{eqnarray*}%
when $t_{1}\rightarrow t_{2}.$ Hence, $A\left( \Omega \right) $ is
equicontinuous. Thanks to Arzela-Ascoli's theorem we get that $A$ is
completely continuous. Moreover, by Schauder fixed point theorem we conclude
that $A$ has a fixed point $u\in \Omega $ which is a solution of the
modified problem (MP).

Localization of solution. Let us prove that if $u$ is a solution of the
modified problem (MP), it satisfies%
\begin{equation*}
\underline{\sigma }\left( t\right) \leq u\left( t\right) \leq \overline{%
\sigma }\left( t\right) .  \tag{3.6}
\end{equation*}%
Set $w=u-\overline{\sigma }.$\ Assuming the contrary, so there exists $%
t_{0}\in \left[ a,b\right] $ such that
\begin{equation*}
\underset{t\in \left[ a,b\right] }{\max }w\left( t\right) =w\left(
t_{0}\right) >0
\end{equation*}%
therefore, we have some cases to consider such as the following:

\textbf{Case 1}: If $t_{0}\in \left( a,b\right) ,$ then from
Proposition 1 it yields, $T_{\alpha }^{a}w\left( t_{0}\right) \leq
0.$ Using the fact that $\overline{\sigma }$ is an upper solution
for problem (P), we get
\begin{eqnarray*}
T_{\alpha }^{a}w\left( t_{0}\right)  &=&T_{\alpha }^{a}u\left( t_{0}\right)
-T_{\alpha }^{a}\overline{\sigma }\left( t_{0}\right)  \\
&=&-f(t,\overline{\sigma }\left( t_{0}\right) )-\frac{\overline{\sigma }%
\left( t_{0}\right) -u\left( t_{0}\right) }{u\left( t_{0}\right) -\overline{%
\sigma }\left( t_{0}\right) +1}-T_{\alpha }^{a}\overline{\sigma }\left(
t_{0}\right) >0,
\end{eqnarray*}%
that leads to a contradiction, thus the maximum of $w$ is not achieved\ at
the point $t_{0}\in \left( a,b\right) $.

\textbf{Case 2:} If $t_{0}=a$, we obtain%
\begin{equation*}
w(a)=u\left( a\right) -\overline{\sigma }\left( a\right) >0.
\end{equation*}
On the other hand, since $u$ is solution, then $u\left( a\right) =0$ and
consequently $\overline{\sigma }\left( a\right) <0,$ which contradicts the
fact that $\overline{\sigma }$ is an upper solution of problem (P).

\textbf{Case 3:} If $t_{0}=b$, we obtain a contradiction as in the second
case.
\end{proof}

Applying similar reasoning, we prove that $\underline{\sigma }\left(
t\right) \leq u\left( t\right) ,$ $\forall t\in \left[ a,b\right] .$ Finally
from (3.6) we conclude that $u$ is a solution of problem (P). The proof is
completed.

\section{\; Lyapunov inequality}
 Let $f(t,u\left( t\right) )=q\left( t\right) u\left( t\right) ,$
then
problem (P) becomes%
\begin{equation*}
T_{\alpha }^{a}u\left( t\right) +q\left( t\right) u\left( t\right) =0,a<t<b,
\tag{4.1}
\end{equation*}%
\begin{equation*}
u\left( a\right) =u\left( b\right) =0,
\end{equation*}%
that we denote by (P1). Now we are ready to give the Lyapunov inequality for
problem (P1).

\begin{theorem}
\textbf{\ } Let $q\in C\left( \left[ a,b\right] \right) $. If the
boundary value problem (P1) has a solution $u\in AC^{2}\left(
\left[ a,b\right] \right) $ such that
$u(t)\neq 0$ a.e. on $\left( a,b\right) $, then%
\begin{equation*}
\int_{a}^{b}\left\vert q\left( s\right) \right\vert \rho (s)ds\geq \frac{4}{%
b-a}.  \tag{4.2}
\end{equation*}
\end{theorem}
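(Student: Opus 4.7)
The plan is to mimic the classical Lyapunov argument: use the Green function representation from Lemma 1 to recast the solution as a fixed point, pass to the sup norm, and divide by $\|u\|_\infty$.

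First, since $f(t,u(t)) = q(t) u(t)$ is continuous once $u$ is, Lemma 1 gives the integral identity
\begin{equation*}
u(t) = \int_a^b G(t,s)\, q(s)\, u(s)\, \rho(s)\, ds, \qquad t\in[a,b].
\end{equation*}
Taking absolute values and pulling $\|u\|_\infty = \max_{t\in[a,b]} |u(t)|$ outside the integral yields
\begin{equation*}
|u(t)| \le \|u\|_\infty \int_a^b G(t,s)\, |q(s)|\, \rho(s)\, ds,
\end{equation*}
so if we can bound $G(t,s)$ by something sharper than the crude $b-a$ of Lemma 2, we can maximize over $t$, divide by $\|u\|_\infty$, and obtain (4.2).

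The crux is therefore a sharpened estimate for the Green function. The bound in Lemma 2 is too loose to produce the constant $4$. From the explicit form (3.3), a direct computation reduces the two branches to $G(t,s) = (b-t)(s-a)/(b-a)$ for $s\le t$ and $G(t,s) = (t-a)(b-s)/(b-a)$ for $t\le s$. Either branch is maximized over $t$ at $t = s$, giving
\begin{equation*}
\max_{t\in[a,b]} G(t,s) = \frac{(s-a)(b-s)}{b-a} \le \frac{b-a}{4},
\end{equation*}
the last inequality being AM--GM applied to $(s-a) + (b-s) = b-a$. This is the key quantitative input and, I expect, the only step requiring real work; everything else is bookkeeping.

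With this sharper bound in hand, choose $t^\star \in [a,b]$ with $|u(t^\star)| = \|u\|_\infty$; such a point exists because $u\in C[a,b]$ and the hypothesis $u(t)\ne 0$ a.e.\ on $(a,b)$ forces $\|u\|_\infty > 0$. Then
\begin{equation*}
\|u\|_\infty = |u(t^\star)| \le \|u\|_\infty \cdot \frac{b-a}{4} \int_a^b |q(s)|\, \rho(s)\, ds,
\end{equation*}
and dividing by $\|u\|_\infty > 0$ gives $\int_a^b |q(s)|\,\rho(s)\,ds \ge 4/(b-a)$, which is (4.2). One may note as a sanity check that at $\alpha = 2$, $\rho(s)\equiv 1$ and the classical inequality (1.3) is recovered, confirming the constant.
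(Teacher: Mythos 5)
Your argument is correct, and the key computation checks out: the two branches of (3.3) do simplify to $(s-a)(b-t)/(b-a)$ for $s\le t$ and $(t-a)(b-s)/(b-a)$ for $t\le s$, both maximized at $t=s$, and AM--GM gives $\max_{t}G(t,s)=(s-a)(b-s)/(b-a)\le (b-a)/4$; combined with the Green-function representation from Lemma 1 (applicable since $y=qu$ is continuous and $G\ge 0$ by Lemma 2), evaluating at a maximizer of $|u|$ and dividing by $\Vert u\Vert_\infty>0$ yields (4.2). However, this is genuinely not the paper's route. The paper follows Borg's method: it divides the equation by $u$ to get $|q(t)|=|T_\alpha^a u(t)/u(t)|$ a.e., integrates against the weight $\rho$, uses the identity $T_\alpha^a u(s)=(s-a)^{2-\alpha}u''(s)$ so that the weight cancels and $\int_c^d|u''|\ge|u'(d)-u'(c)|$, then applies the mean value theorem on $[a,\xi]$ and $[\xi,b]$ (where $|u|$ attains its maximum at $\xi$) and finishes with the harmonic-mean inequality $\frac{1}{b-\xi}+\frac{1}{\xi-a}\ge\frac{4}{b-a}$. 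Your Ferreira-style Green-function approach buys something concrete: it needs only that $u$ is a nontrivial continuous solution, whereas the paper's division by $u$ is what forces the hypothesis $u(t)\ne 0$ a.e.\ on $(a,b)$; it also avoids the paper's implicit normalization that the maximum of $|u|$ is attained as a positive value of $u$. The paper's Borg approach, on the other hand, makes transparent why the weight $\rho(s)=(s-a)^{\alpha-2}$ is exactly the right one (it is what cancels the factor $(s-a)^{2-\alpha}$ in the conformable derivative) and ties the result to the Borg form (1.4) of the classical inequality. Both arguments produce the same constant $4/(b-a)$ and the same weighted integral, so your proof is a valid, and in some respects cleaner, alternative.
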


\begin{proof}
\textbf{\ } Let $u\in AC^{2}\left( \left[ a,b\right] \right) $ be
a solution of problem (P1) such that $u(t)\neq 0$ a.e. on $\left(
a,b\right) $, then from equation (4.1), we can write
\begin{equation*}
\left\vert q\left( t\right) \right\vert =\left\vert \frac{T_{\alpha
}^{a}u\left( t\right) }{u(t)}\right\vert ,  \tag{4.3}
\end{equation*}%
\ a.e. on $\left( a,b\right) .$ Applying the integral operator $I_{\alpha
-1}^{a}$ to both sides of the differential equation (4.3) and following the
same ideas as in [8], we get for all $a<c<d<b$
\begin{eqnarray*}
\left( I_{\alpha -1}^{a}\left\vert q\right\vert \right) \left( t\right)
\left\vert _{t=b}\right. &=&\int_{a}^{b}\left( s-a\right) ^{\alpha
-2}\left\vert q\left( s\right) \right\vert ds \\
&=&\int_{a}^{b}\left( s-a\right) ^{\alpha -2}\left\vert \frac{T_{\alpha
}^{a}u\left( s\right) }{u(s)}\right\vert ds \\
&\geq &\left( \left\Vert u\right\Vert \right) ^{-1}\int_{a}^{b}\left(
s-a\right) ^{\alpha -2}\left\vert \left( s-a\right) ^{2-\alpha }u^{\prime
\prime }\left( s\right) \right\vert ds \\
&\geq &\left( \left\Vert u\right\Vert \right) ^{-1}\int_{c}^{d}\left\vert
u^{\prime \prime }\left( s\right) \right\vert ds.
\end{eqnarray*}%
Since the function $u^{\prime }$ is absolutely continuous on $\left[ a,b%
\right] ,$ it yields%
\begin{equation*}
\left( I_{\alpha -1}^{a}\left\vert q\right\vert \right) \left( t\right)
\left\vert _{t=b}\right. \geq \left( \left\Vert u\right\Vert \right)
^{-1}\left\vert u^{\prime }\left( d\right) -u^{\prime }\left( c\right)
\right\vert ,
\end{equation*}%
where $\left\vert \left\vert u\right\vert \right\vert =\underset{t\in \left[
a,b\right] }{\max }\left\vert u\left( t\right) \right\vert $. Let $%
\left\vert \left\vert u\right\vert \right\vert =u\left( \xi \right) $\ then
the Mean value theorem implies there exist $a<c<\xi $ and $\xi <d<b$ such
that%
\begin{eqnarray*}
\left( I_{\alpha -1}^{a}\left\vert q\right\vert \right) \left( t\right)
\left\vert _{t=b}\right. &\geq &\left( \left\Vert u\right\Vert \right)
^{-1}\left\vert \frac{u\left( b\right) -u\left( \xi \right) }{b-\xi }-\frac{%
u\left( \xi \right) -u\left( a\right) }{\xi -a}\right\vert \\
&=&\frac{1}{b-\xi }+\frac{1}{\xi -a}.
\end{eqnarray*}%
Finally thanks to the harmonic mean inequality, we get (4.2).
\end{proof}

\begin{remark}
\textbf{\ } Note that if $\alpha \rightarrow 2$, then we get the
classical Lyapunov inequality (1.3).
\end{remark}

\section*{\; Acknowledgement}
The authors are grateful to the anonymous referees for their valuable
comments and specially grateful to The Editor-in-Chief Prof. Dumitru
Baleanu, for his comments and suggestions that improved this paper.

\emergencystretch=\hsize

\begin{center}
\rule{6 cm}{0.02 cm}
\end{center}

\end{document}